\newtheorem{theorem}{Theorem}% [section]
\newtheorem{corollary}[theorem]{Corollary}
\newcommand{\noi}{\noindent}
\newcommand{\R}{\mathbb{R}}
\newcommand{\dx}{\partial_x}
\newcommand{\dt}{\partial_t}
\newcommand{\jb}[1]
{\langle #1 \rangle}
\begin{document}
					
\date{}

\title
[gKdV equations]
{Nonexistence of soliton-like solutions for defocusing generalized KdV equations}

\author{Soonsik Kwon}
\address{Department of Mathematical Sciences\\
Korea Advanced Institute of Science and Technology\\
291 Daehak-ro Yuseong-gu, Daejeon 305-701, Republic of Korea}
\email{soonsikk@kaist.edu}

\author{Shaunglin Shao}
\address{ 
Department of Mathematics, University of Kansas, Lawrence, KS 66045, USA}
\email{slshao@math.ku.edu}

\begin{abstract}
We consider the global dynamics of the defocusing generalized KdV equations: 
$$ \dt u + \dx^3 u = \dx(|u|^{p-1}u). $$ 
We use Tao's theorem \cite{Tao} that the energy moves faster than mass to prove a moment type dispersion estimate. As an application of the dispersion estimate, we show that there is no soliton-like solutions with decaying assumption. 
\end{abstract}

\subjclass[2000]{35Q53} 

\maketitle

In this short note, we prove a dispersion estimate of the second moment type for the defocusing generalized KdV equations:
\begin{equation}\label{gkdv}
\dt u + \dx^3 u = \dx(|u|^{p-1}u),\qquad u: \R\times \R \to \R. 
\end{equation}
As an application, we show that there is no soliton-like solution with decaying condition. \\
The gKdV equation \eqref{gkdv} enjoys the mass and energy conservation laws:
\begin{align*}
M(u) &= \int u^2(x) \,dx  \\
E(u) &= \int \frac 12 u_x^2 + \frac{1}{p+1} |u|^{p+1} \,dx
\end{align*}
\noi The local well-posedness of the Cauchy problem on the energy space $H^1(\R)$ is well known \cite{KPV93} and the energy conservation law implies the global existence. \\
In the focusing case, where the sign of the nonlinear term is opposite, there is the soliton solutions $u(t,x) = Q(x-t)$, where $Q$ is the ground state solution: 
$$ Q(x) = \Big(\frac{p+1}{2\cosh^2(\frac{p-1}{2}x)}  \Big)^{1/(p-1)}.   $$
From the Pohozaev identity, one can show that there is no such soliton solution of permanent form in the defocusing case. Furthermore, it is conjectured that the nonlinear global solution scatters to a linear solution. Indeed,  
$$  \lim_{t\to \pm \infty } \| u(t) - e^{-t\dx^3} u_{\pm} \| \to 0. $$
If it were true, as this describes a concrete asymptotic behavior, it implies that there is no spacially localized solutions such as $L^2$- compact solutions.\footnote{there exists a function $x(t)$ such that for any $\epsilon>0$, there exists $R=R(\epsilon) >0$ such that $\int_{|x-x(t)|>R } u^2(t,x)\,dx < \epsilon.$} But toward this direction, there is only a partial result \cite{KKSV}. \\
The purpose of this note is to show an intermediate version. We prove the nonexistence of soliton-like solutions. Main ingredient is the fact that the energy moves faster than the mass to the left. \\
Define the center of mass and the center of energy 
\begin{align*}
\jb{x}_{M}(t) &= \frac 1{M(u)}\int xu^2(t,x) \,dx, \\
\jb{x}_E(t) &= \frac 1{E(u)}\int x (\frac 12 u_x^2 + \frac{1}{p+1} |u|^{p+1}) \,dx.
\end{align*}
Tao \cite{Tao} showed the following monotonicity estimate regarding the center of mass and the center of energy.  

\begin{theorem}[Tao \cite{Tao}]\label{th:tao} Let $p \ge \sqrt{3}$. There exist $c=c(M,E) >0 $ such that
\begin{equation}
\dt\jb{x}_M -\dt\jb{x}_E > c(M,E).
\end{equation}
In particular, we have the dispersion estimate: for any function $x(t)$,
\begin{equation}\label{tao dispersion}
\sup_{t\in \R} \int |x-x(t)|(\rho(t,x) + e(t,x)) \,dx =\infty 
\end{equation}
where $ \rho(t,x) = u^2(t,x)$ and $e(t,x) = \frac 12 u_x^2 + \frac{1}{p+1}|u|^{p+1} $. 
\end{theorem}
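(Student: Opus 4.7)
The plan is to derive virial-type identities for $\int x u^2$ and $\int xe$ by integration by parts, combine them into a single expression, and extract a positive lower bound via Cauchy--Schwarz and a Sobolev interpolation; the dispersion estimate will then follow by a simple contradiction argument.

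First I compute $\dt \int xu^2 \, dx$ by multiplying \eqref{gkdv} by $2xu$ and integrating by parts: three integrations by parts on $-2\int xu u_{xxx}\, dx$ yield $-3\int u_x^2\, dx$, while the nonlinear term $2\int xu (|u|^{p-1}u)_x\, dx$ becomes $-\tfrac{2p}{p+1}\int |u|^{p+1}\, dx$ after rewriting $u|u|^{p-1}u_x = \tfrac{1}{p+1}(|u|^{p+1})_x$. For the energy center I first derive the local conservation law $\dt e + \dx J = 0$ with flux $J = -u_x u_t - \tfrac{1}{2}\Phi^2$, $\Phi = |u|^{p-1}u - u_{xx}$; pairing $\dt e$ with $x$ and integrating by parts then gives $\dt \int xe\, dx = -\tfrac{3}{2}\int u_{xx}^2 - 2p\int |u|^{p-1}u_x^2 - \tfrac{1}{2}\int |u|^{2p}$. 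Both derivatives are negative, so the two centers drift leftward.

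Writing $a := \int u_x^2$, $b := \int |u|^{p+1}$, $c := \int u_{xx}^2$, $d := \int |u|^{p-1}u_x^2$, $f := \int |u|^{2p}$, combining the two identities on a common denominator gives
\[
ME\bigl(\dt \jb{x}_M - \dt \jb{x}_E\bigr) = \tfrac{3}{2}(Mc - a^2) + \bigl(2pMd - \tfrac{p+3}{p+1}ab\bigr) + \bigl(\tfrac{1}{2}Mf - \tfrac{2p}{(p+1)^2}b^2\bigr).
\]
Cauchy--Schwarz applied to $a = -\int u u_{xx}$ and $b = \int u \cdot |u|^p$ gives $a^2 \leq Mc$ and $b^2 \leq Mf$, so the first and third bracketed expressions are non-negative. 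The middle term is the delicate part: applying the 1D Sobolev estimate $\|v\|_\infty^2 \leq 2\|v\|_2 \|v_x\|_2$ to $v = |u|^{(p+1)/2}$ gives $\|u\|_\infty^{p+1} \leq (p+1)(bd)^{1/2}$, which combined with $b \leq \|u\|_\infty^{p-1} M$ yields the interpolation $b^{(p+3)/(p-1)} \leq (p+1)^2 M^{2(p+1)/(p-1)}\, d$, i.e., a quantitative lower bound on $Md$ in terms of $M$ and $b$. Using also the identity $2E = a + \tfrac{2b}{p+1}$ to eliminate $a$ in favor of $E$ and $b$, the three bracketed terms collapse to something bounded below by $c(M, E) \cdot ME$ for some positive constant $c(M, E)$. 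The hypothesis $p \geq \sqrt{3}$ is precisely what makes the resulting quadratic form in $(a, b)$ (hidden in the numerator) control the indefinite cross term and hence stay strictly positive.

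For the dispersion estimate, suppose for contradiction that some $x(\cdot)$ satisfies $\sup_t \int |x - x(t)|(\rho + e)\, dx \leq K < \infty$. Then $|\jb{x}_M(t) - x(t)| \leq K/M$ and $|\jb{x}_E(t) - x(t)| \leq K/E$, so $|\jb{x}_M(t) - \jb{x}_E(t)|$ is uniformly bounded. But integrating the monotonicity in $t$ yields $\jb{x}_M(t) - \jb{x}_E(t) \geq c(M, E)(t - t_0) + \mathrm{const}$, unbounded as $t \to +\infty$; contradiction. The hardest step is the one above---extracting the explicit dependence of $c(M, E)$ on $M, E$ through the interpolation estimates and tracking precisely how the threshold $p = \sqrt 3$ emerges from the combined quadratic form; the virial identities and the final contradiction are routine by comparison.
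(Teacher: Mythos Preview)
The paper does not contain a proof of this statement; Theorem~\ref{th:tao} is quoted as a result of Tao \cite{Tao} and used as a black box in the proof of Theorem~\ref{th:dispersion}. So there is no ``paper's own proof'' to compare against.

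On the merits of your outline: the virial identities you compute are correct, and the decomposition
\[
ME\bigl(\dt \jb{x}_M - \dt \jb{x}_E\bigr) = \tfrac{3}{2}(Mc - a^2) + \bigl(2pMd - \tfrac{p+3}{p+1}ab\bigr) + \bigl(\tfrac{1}{2}Mf - \tfrac{2p}{(p+1)^2}b^2\bigr)
\]
together with the Cauchy--Schwarz bounds $a^2\le Mc$, $b^2\le Mf$ and the interpolation $b^{(p+3)/(p-1)}\le (p+1)^2 M^{2(p+1)/(p-1)} d$ are exactly the ingredients Tao uses. The deduction of \eqref{tao dispersion} from the monotonicity is also correct and routine.

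The gap is in the step you yourself flag as hardest: you assert that ``the three bracketed terms collapse to something bounded below by $c(M,E)\cdot ME$'' and that ``$p\ge\sqrt{3}$ is precisely what makes the resulting quadratic form \dots\ stay strictly positive,'' but you do not actually carry this out. This is the entire content of the theorem. Brackets one and three are nonnegative by Cauchy--Schwarz, but they can both vanish in the limit (e.g.\ as $b\to 0$ along a family with fixed $M,E$ the third bracket and the interpolation lower bound for $Md$ both degenerate), so the strict lower bound $c(M,E)>0$ does not follow from what you have written. One must exploit the \emph{gap} in the Cauchy--Schwarz inequalities (not merely their nonnegativity) and combine it with the middle bracket; the threshold $p^2\ge 3$ enters precisely when balancing these contributions. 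As written, your proposal identifies the correct objects and inequalities but stops short of the actual positivity argument.
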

This theorem shows that the center of energy moves faster than the center of mass. This behavior is intuitive. From the stationary phase of the linear equation $ u_t + u_{xxx} =0$, one can observe that the group velocity is $ -3\xi^2$, where $\xi$ is the frequency of the wave. Group velocity is negative definite and so every wave moves to the left. Moreover, the higher frequency waves move faster than low frequency waves. Since the energy is more weighted on high frequencies than mass, the center of energy moves faster to the left. The second part of Theorem~\ref{th:tao} is a result from the fact that the distance between $\jb{x}_M $ and $\jb{x}_E $ goes to infinity. \\
We use this property to study a dispersion estimate of moment type.

\begin{theorem}\label{th:dispersion}
Let $p\ge \sqrt{3}$. Let $u(t,x)$ be a nonzero global Schwartz solution to \eqref{gkdv}. Then for any function $x(t)$, 
\begin{equation}\label{eq:dispertion}
\sup_{t\in \R} \int (x-x(t))^2 u^2(t,x) \,dx = \infty. 
\end{equation} 

\end{theorem}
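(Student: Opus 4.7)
My plan is to argue by contradiction via a virial-type identity for the second moment, combined with the monotonicity in Theorem~\ref{th:tao}. Assuming $\sup_t \int (x-x(t))^2 u^2\,dx \le K < \infty$ for some $x(t)$, I will show that the second moment must in fact grow at least quadratically in $t$, which is absurd.

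First I would reduce to working with the center of mass. Since $y \mapsto \int (x-y)^2 u^2\,dx$ is minimized at the mean $\jb{x}_M(t)$, the quantity $V(t) := \int (x-\jb{x}_M(t))^2 u^2\,dx$ automatically satisfies $V(t) \le K$. This choice of center has two further advantages: the orthogonality $\int(x-\jb{x}_M)u^2 \equiv 0$ makes the $\dt\jb{x}_M$ contribution to $\dt V$ vanish, and $\jb{x}_M(t)$ is smooth in $t$ with uniformly bounded derivative (by the standard virial formula and conservation of mass/energy). A weighted integration by parts in \eqref{gkdv} against the weight $(x-\jb{x}_M)^2$, whose third $x$-derivative vanishes (which is precisely why no uncontrolled $u^2$-moments appear), should yield the clean identity
\begin{equation*}
\dt V(t) = -6\int (x-\jb{x}_M) u_x^2\,dx - \frac{4p}{p+1}\int (x-\jb{x}_M) |u|^{p+1}\,dx,
\end{equation*}
involving only first-moment quantities on the right.

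Then I would invoke Theorem~\ref{th:tao}. By definition $\int(x-\jb{x}_M)e(t,x)\,dx = E(\jb{x}_E - \jb{x}_M)$, and integrating the monotonicity from Theorem~\ref{th:tao} in time gives $E(\jb{x}_E - \jb{x}_M)(t) \le -cEt + O(1)$ as $t\to\infty$. Gagliardo--Nirenberg applied to the conserved $H^1$ norm yields $\|u(t)\|_\infty \le C$ uniformly, so Cauchy--Schwarz together with $V \le K$ bounds the nonlinear moment: $|\int(x-\jb{x}_M)|u|^{p+1}\,dx| \le \|u\|_\infty^{p-1}\sqrt{MV} = O(1)$. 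Subtracting the two displays, $\int(x-\jb{x}_M) u_x^2\,dx \le -2cEt + O(1)$, and plugging back into the virial identity gives $\dt V(t) \ge 12 c E t - O(1)$ for $t$ large. Integrating in time forces $V(t) \gtrsim t^2$, contradicting $V \le K$.

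The main technical step is the weighted virial identity: the two integrations by parts must cleanly produce only first-moment quantities with the signs above, which depends on the polynomial weight $(x-\jb{x}_M(t))^2$ and the structure of the dispersion $\dx^3$. The uniform Gagliardo--Nirenberg bound and the smoothness of $\jb{x}_M(t)$ are standard consequences of the conservation laws, and differentiation under the integral is justified by the Schwartz regularity of the solution.
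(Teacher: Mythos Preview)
Your argument is correct and follows essentially the same route as the paper: reduce to the center of mass, derive the virial identity $\dt V = -6\int(x-\jb{x}_M)u_x^2 - \tfrac{4p}{p+1}\int(x-\jb{x}_M)|u|^{p+1}$, bound the nonlinear first moment via the uniform $L^\infty$ control and the assumed bound on $V$, and then let Tao's monotonicity drive the main term to $+\infty$ to force $V\to\infty$. The only cosmetic differences are that the paper rewrites the right-hand side directly as $-12E(\jb{x}_E-\jb{x}_M)$ plus a remainder and bounds the nonlinear moment via $|y|\le 1+y^2$ rather than Cauchy--Schwarz; your explicit quadratic growth $V\gtrsim t^2$ is a slightly sharper way to state the contradiction than the paper's ``eventually $\dt V>0$''.
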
 

This can be seen as an improvement of \eqref{tao dispersion}, since we use solely the mass density. Roughly speaking, Theorem~\ref{th:dispersion} tells that the mass cannot be localized around the center of mass, but has to spread out in time. Usually, such a dispersion behavior is characterized as time decay of solutions or the boundedness of space-time norms, such as the Strichartz estimates. This provides another form of dispersion estimate.\\
As a corollary, we observe that there is no soliton-like solution under decaying assumption. 
\begin{corollary}\label{co:nonexistence}
Assume that $u(t,x)$ is a global soliton-like solution in the sense that there exists $x(t) \in \R $ such that for any $R>0$,  
$$  \sup_{t\in \R} \int_{|x-x(t)|>R} u^2(t,x)\,dx \lesssim \frac{1}{R^{2+\epsilon}}.
$$
Then $ u\equiv 0$.
\end{corollary}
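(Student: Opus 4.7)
I would prove Corollary~\ref{co:nonexistence} by contrapositive of Theorem~\ref{th:dispersion}: it suffices to show that the pointwise $L^2$-decay hypothesis, with the given exponent $2+\epsilon$, forces the second moment $\int(x-x(t))^2 u^2(t,x)\,dx$ to be \emph{uniformly} bounded in $t\in\R$, using the same $x(t)$ supplied by the soliton-like assumption. If so, Theorem~\ref{th:dispersion} immediately gives $u\equiv 0$.

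\textbf{Key step: dyadic decomposition of the second moment.} Fix $t\in\R$ and split $\R$ into the core $\{|x-x(t)|\le 1\}$ and the dyadic shells $A_k:=\{2^k<|x-x(t)|\le 2^{k+1}\}$ for $k\ge 0$. On the core, $(x-x(t))^2\le 1$, so that piece contributes at most the conserved mass $M(u)$. On each shell, use $(x-x(t))^2\le 4\cdot 2^{2k}$ together with the hypothesis $\int_{|x-x(t)|>2^k}u^2\lesssim 2^{-k(2+\epsilon)}$ to get
\[
\int_{A_k}(x-x(t))^2\, u^2(t,x)\,dx\;\lesssim\; 2^{2k}\cdot 2^{-k(2+\epsilon)}\;=\;2^{-k\epsilon}.
\]
Summing the geometric series and adding the core contribution yields
\[
\sup_{t\in\R}\int (x-x(t))^2\, u^2(t,x)\,dx\;\lesssim\; M(u)+\sum_{k\ge 0}2^{-k\epsilon}\;<\;\infty,
\]
which directly contradicts \eqref{eq:dispertion} unless $u\equiv 0$. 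This is the whole idea; the exponent $2+\epsilon$ in the hypothesis is exactly what is needed to beat the $(x-x(t))^2$ weight with a little room to spare for summability.

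\textbf{Main obstacle.} Theorem~\ref{th:dispersion} is stated for Schwartz solutions, while a soliton-like solution as defined in the corollary is a priori only in $H^1(\R)$ (from the local well-posedness theory) together with polynomial $L^2$-localization around $x(t)$. The technical step is to justify applying Theorem~\ref{th:dispersion} in this broader class. The natural route is to approximate the initial data by Schwartz data, invoke persistence of regularity and persistence of polynomial moments for \eqref{gkdv} (standard for gKdV with smooth, rapidly decaying data), and pass to the limit in the second-moment bound; alternatively, one inspects the proof of Theorem~\ref{th:dispersion} and checks that $H^1\cap L^2((1+x^2)dx)$ regularity suffices for every identity used. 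Either way, this density/persistence argument — not the dyadic estimate above — is where the real work, and the only real obstacle, lies.
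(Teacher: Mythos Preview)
Your argument is correct and is essentially identical to the paper's own proof: the paper also splits into the core $\{|x-x(t)|<1\}$ (bounded by $M(u)$) and the dyadic shells $\{2^k\le |x-x(t)|<2^{k+1}\}$, bounds each shell by $2^{2(k+1)}\cdot 2^{-(2+\epsilon)k}$, sums, and invokes Theorem~\ref{th:dispersion}. The regularity mismatch you flag (Theorem~\ref{th:dispersion} is stated for Schwartz solutions) is not addressed in the paper either, so your proposal is at least as complete as the original.
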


There are some works of this type. de Bouard and Martel \cite{deBouard-Martel} showed for the KP-II equation  the nonexistence of $L^2$- compact solutions under certain positivity condition on $x'(t)$. Their work can be written for the defocusing gKdV equation with $x'(t) > 0$ condition. This can read that there is no soliton-like solution moving to the right, as a real soliton solution moves to the right. Here, we do not specify a direction. In \cite{Martel-Merle}, Martel and Merle assume a similar decaying condition, and show the nonexistence of minimal mass blow-up solutions for critical gKdV equation (p=5).  \\ 
In the rest of the note, we provide the proof of Theorem~\ref{th:dispersion} and Corollary~\ref{co:nonexistence}.

\begin{proof}[Proof of Theorem~\ref{th:dispersion}]  $\,$ \\
As $\jb{x}_M = \frac{1}{M(u)} \int xu^2(x)\,dx$ is a critical point of 
$$ f(a) = \int (x-a)^2u^2(x)\,dx, $$ $\int (x-x(t))^2 u(t,x) \,dx $ is minimized at $x(t) = \jb{x}_M $. So, it suffices to show 
\begin{equation}\label{dispersion}
\sup_{t\in \R} \int (x-\jb{x}_M)^2 u^2(t,x) \,dx = \infty. 
\end{equation}
We use the equation \eqref{gkdv} and integration by parts to compute
\begin{align*}
\frac{d}{dt}\int &(x-\jb{x}_M)^2 u^2(t,x)\,dx \\
&= -\int 2(x-\jb{x}_M)  u^2 \,dx\cdot \frac{d}{dt}\jb{x}_M + \int (x-\jb{x}_M)^2 2uu_t \, dx \\
       &= 0 + \int (x-\jb{x}_M)^2 2u(-u_{xxx} + \dx(|u|^{p-1}u) )\,dx \\
       &\ge -6 \int u_x^2(x-\jb{x}_M) \,dx -4 \int (x-\jb{x}_M)|u|^{p+1}\,dx + \frac{4}{p+1} \int (x-\jb{x}_M) |u|^{p+1} \,dx \\
       &=-12\int \Big(\frac 12 u_x^2 + \frac{1}{p+1}|u|^{p+1}\Big)(x-\jb{x}_M) \,dx - \frac{4p-12}{p+1} \int |u|^{p+1}(x-\jb{x}_M) \,dx \\
       &= -12E(u) \Big( \jb{x}_E - \jb{x}_M \Big) - \frac{4p-12}{p+1} \int|u|^{p+1}(x-\jb{x}_M) \,dx 
\end{align*}

We show \eqref{dispersion} by contradiction, assuming that
$$ \sup_{t\in \R} \int (x-\jb{x}_M)^2 u^2(t,x) \,dx < C. $$
The second term is bounded due to Sobolev embedding and conversation laws: 

\begin{align*}
 \int|u|^{p+1}(x-\jb{x}_M)\,dx &\le \|u\|^{p-1}_{L^\infty}\Big( \int u^2(t,x)(x-\jb{x}_M)^2 \,dx +M(u) \Big) \\
                           &\le  2(E(u)+M(u))(C+ M(u)) \le C_1.
\end{align*}

From Theorem~\ref{th:tao}, since $\jb{x}_E-\jb{x}_M$ monotonically decreases, we have eventually 
$$ \frac{d}{dt}\int (x-\jb{x}_M)^2 u^2(t,x)\,dx \ge -12E(u)(\jb{x}_E-\jb{x}_M) -C_1 > 0. $$ This makes a contradiction.  
\end{proof} 

\begin{proof}[Proof of Corollary~\ref{co:nonexistence}] $ $\\
We simply estimate
\begin{align*}
\int (x-x(t))^2 u^2(t,x)\,dx &\le M(u) + \sum_{k=0}^\infty \int_{\{2^{k+1}> |x-x(t)|\ge 2^{k}\}}(x-x(t))^2 u^2(t,x)\,dx \\
                         & \lesssim M(u) + \sum_{k=0}^\infty  2^{2(k+1)}\cdot 2^{(-2-\epsilon)k} < \infty. 
\end{align*}
Hence, by Theorem~\ref{th:dispersion}, $u\equiv 0$.

\end{proof}

%\begin{remark}
%This moment type dispersion estimate is weaker than space-time norm bound.
%In view of 

%\end{remark}

\noi \textbf{Acknowledgements} \\ 
S.K. is partially supported by NRF(Korea) grant 2010-0024017. S.S. is partially supported by DMS-1160981.

%\section{}
%\subsection{}

\end{document}